\newtheorem{thm}{Theorem}[section]
\newtheorem{lem}[thm]{Lemma}
\newtheorem{rem}[thm]{Remark}
\begin{document}
\title{Capacity of the Adini element for biharmonic equations}
\author{\normalsize Jun Hu$^\dagger$,~~Xueqin Yang$^\star$,~~Shuo Zhang$^\ast$\\ \normalsize
$^\dagger$ LMAM and School of Mathematical Sciences,
Peking University, \\ \normalsize Beijing 100871, P.R.China\\
$^\star$ School of Mathematical Sciences,
Peking University, \\ \normalsize Beijing 100871, P.R.China\\
$^\ast$ LESC, ICMSEC, NCMIS, Academy of Mathematics and System Science,\\ Chinese Academy of Sciences,
\normalsize Beijing 100190, P.R.China\\
\\\vspace{2mm} \normalsize email: hujun@math.pku.edu.cn; ~~ yangxueqin1212@pku.edu.cn;\\ szhang@lsec.cc.ac.cn \normalsize
}
\date{}
\maketitle


\begin{abstract}
This paper is devoted to the convergence analysis of the Adini element scheme for the fourth order problem in any dimension.  We showed that, under the regularity assumption that the exact solution is in $H^4$, the Adini element scheme is $\mathcal{O}(h^2)$ order convergent in energy norm, and the convergence rate in $L^2$ norm can not be nontrivially higher than $\mathcal{O}(h^2)$ order. Numerical verifications are presented.\\
\textbf{Keywords:} biharmonic equation\quad Adini element\quad in any dimension
\end{abstract}

\tableofcontents

\section{Introduction}
\label{intro}
This paper is devoted to the convergence analysis of the Adini element scheme for the fourth order problem in any dimension. The Adini element, (c.f. \cite{AdiniClough} for 2D, \cite{WangShiXu} for higher dimension) is among the earliest finite elements for elliptic problems. It uses the rectangles (2D) and generalised rectangles (higher dimensions) as geometry shapes, and the evaluation and the derivatives of first order on the vertices as nodal parameters. The generation of stiffness matrix is easy and friendly, and this element has become a popular one during the past half century, and stimulated various works, (see \cite{LascauxLesaint}, \cite{MaoChen}, \cite{LuoLin}, \cite{WangShiXu}). In this present paper, we discuss the capacity of the Adini element scheme for fourth order problems, and present a sharp analysis of the upper and lower bound of the convergence rate in energy and integral norms in arbitrary dimensions.

When used for second order problems, the Adini element scheme is a conforming one, and the error analysis is straightforward by the fundamental C\'ea lemma and standard arguments. When used for fourth order problems, however, the Adini element is a nonconforming one, and the convergence analysis is more subtle. In Wang, Shi and Xu \cite{WangShiXu} where they generalised the Adini element from 2D to arbitrary dimension, the $\mathcal{O}(h)$ convergence rate of Adini element in any dimension has been proved for fourth order problem. Meanwhile, a higher accuracy of the scheme is still expected and numerically observed. In 2D, it has been proved by Lascaux and Lesaint \cite{LascauxLesaint} that the finite element solution converges to the exact solution with $\mathcal{O}(h^2)$ order in the energy norm, provided the rectangular cells in the grid are all the same. Then in 2004, Lin and Luo \cite{LuoLin} showed the $\mathcal{O}(h^2)$ convergence of the Adini element without assuming the congruence of the cells of the grid. Later in 2006, Mao and Chen \cite{MaoChen} showed further the $\mathcal{O}(h^2)$ convergence rate for anisotropic grids. So far to our knowledge, the sharp analysis of the convergence rate of Adini element for fourth order problems in higher dimensions is still absent.

In this paper, we study the convergence rate of the Adini element scheme for fourth order problems in higher dimensions. Technically, without making a crucial use of the nodal interpolation which was done by (\cite{LascauxLesaint}, \cite{LuoLin}, \cite{MaoChen}) and which will bring extra regularity assumption on the exact solution in higher dimension, our analysis relies on the structure of Adini element space only. We figure out the intrinsic symmetry property of the Adini element space, and show the $\mathcal{O}(h^2)$ energy norm convergence rate in a unified way with respect to the dimensions provided the exact solution belongs to $H^4$.

There have been works that study high accuracy nonconforming finite element methods for fourth order problems. Several $\mathcal{O}(h^2)$ nonconforming finite elements have been constructed in, e.g.,  \cite{ChenChenQiao,ShiChen,WangZuZhang}. In contrast to these elements, the Adini element space does not possess such moment continuities; the average of the normal derivatives of Adini element function is not continuous across the internal faces. This hints us to make use of a different way by using the symmetric property inside one cell, and moreover, this unusual property makes it hardly possible to make use of the dual argument to obtain higher order convergence rate in $H^1$ or $L^2$ norm. Indeed, in the paper, we further show that the convergence rate in $L^2$ norm can not be non-trivially higher than $\mathcal{O}(h^2)$ order.

The analysis of the lower bound of the convergence rate of the Adini element scheme in $L^2$ norm is a generalization of Hu-Shi's work \cite{HuShi}, which solved an open problem whether the convergent order in $L^2$ norm can always be higher than that in the energy norm. Technically, a decomposition of the residual $(f,u-u_h)$ to a leading term and other higher order terms works crucially, and we estimate the lower bound of the leading term sufficiently. Again, a sharp analysis of the interpolation operator will play a key role. Therefore, by the discrete Poincar\'e inequality, we obtain that the convergence rate of the Adini element scheme for fourth order problem in energy norm, $H^1$ norm and the integral norm are all of $\mathcal{O}(h^2)$ order, and these estimates are all sharp.

The remaining of the paper is organized as follows. In Section \ref{sec:preliminaries}, we present some preliminaries of the Adini element. In Section \ref{sec:model problem}, we present the model problem and the Adini finite element discretization. In Section \ref{sec:energynorm}, we show the $\mathcal{O}(h^2)$ order convergence rate in energy norm in any dimension. In Section \ref{sec:l2norm},  we further show the $\mathcal{O}(h^2)$ order convergence in $L^2$ norm in any dimension. In Section \ref{sec:numerical}, some numerical examples are presented to demonstrate our theoretical results. Finally, in Section \ref{sec:conclusion}, some conclusions are given.

\section{The preliminaries: the Adini element}
\label{sec:preliminaries}

\subsection{The Adini element}

Let $K\subset \mathbb{R}^d$ be a $d$-rectangle, $x_c=(x_{1,c},x_{2,c},\cdots,x_{d,c})^\mathrm{T}\in \mathbb{R}^d$ be the barycentre of $K$, and $h_i$ the half length of $K$ in $x_i$ direction, $i=1,2,\dots,d$. Then the $d$-rectangle can be denoted by
\begin{equation*}
K=\{x=(x_1,x_2,\cdots,x_d)^\mathrm{T}\,|\,x_i=x_{i,c}+\xi_ih_i,\;-1\leq \xi_i\leq 1,\;1\leq i\leq d\}.
\end{equation*}
Particularly, the vertices $a_i,\,1\leq i\leq 2^d,$ of $K$ are denoted by
\begin{equation*}
a_i=(x_{1,c}+\xi_{i1}h_1,x_{2,c}+\xi_{i2}h_2,\cdots,x_{d,c}+\xi_{id}h_d)^\mathrm{T},\; |\xi_{ij}|=1,\ 1\leq j\leq d,\ 1\leq i\leq 2^d.
\end{equation*}
Moreover, denote by $F'_{K,i}$ and $F''_{K,i}$ the two $(d-1)$-dimensional faces of $K$ without the edges parallel to the $x_i$ axe; see Figure 1.

The d-dimensional Adini element is defined by the triple $(K,P_A(K),D)$, where
\begin{itemize}
\item the geometric shape $K$ is a $d$-rectangle;
\item the shape function space is
\begin{equation}
P_A(K):=Q_1(K)+\text{span}\{x_i^2q\;|\,1\leq i\leq d,\; q\in Q_1(K)\},
\end{equation}
here and throughout this paper, $Q_l(K)$ denotes the space of all polynomials which are of degree$\leq l$ with respect to each variable $x_i$, over $K$;
\item the nodal parameters are, for any $v\in C^1(K)$,
\begin{equation}
D(v):=\bigg(v(a_i),\;\;\nabla v(a_i)\bigg),\;\;
\end{equation}
where $a_i$ are vertices of $K$, $i=1,\dots,2^d$.
\end{itemize}

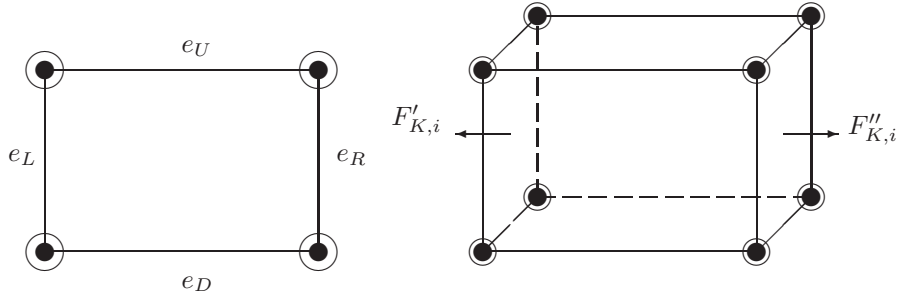
\begin{figure}
\begin{center}
\setlength{\unitlength}{2.4cm}
\begin{picture}(2,2)
\put(-1.,0.5){\line(1,0){1.5}} \put(-1,0.5){\line(0,1){1}}
\put(0.5,0.5){\line(0,1){1}} \put(-1,1.5){\line(1,0){1.5}}
\put(-1.,0.5){\circle*{0.1}}
\put(-1.,0.5){\circle{0.2}}
\put(-1.,1.5){\circle*{0.1}}
\put(-1.,1.5){\circle{0.2}}
\put(0.5,0.5){\circle*{0.1}}
\put(0.5,0.5){\circle{0.2}}
\put(0.5,1.5){\circle*{0.1}}
\put(0.5,1.5){\circle{0.2}}

\put(-0.25,0.3){$ e_D$}
\put(-0.25,1.6){$ e_U$}
\put(-1.2,1){$ e_L$}
\put(0.6,1){$ e_R$}


\put(1.4,0.5){\line(1,0){1.5}} \put(1.4,0.5){\line(0,1){1}}
\put(2.9,0.5){\line(0,1){1}} \put(1.4,1.5){\line(1,0){1.5}}
\put(1.4,0.5){\circle*{0.1}}
\put(1.4,0.5){\circle{0.15}}
\put(1.4,1.5){\circle*{0.1}}
\put(2.9,0.5){\circle*{0.1}}
\put(2.9,1.5){\circle*{0.1}}
\put(1.4,1.5){\circle{0.15}}
\put(2.9,0.5){\circle{0.15}}
\put(2.9,1.5){\circle{0.15}}
\put(1.4,1.5){\line(1,1){0.3}}
\put(2.9,1.5){\line(1,1){0.3}}
\put(1.7,1.8){\line(1,0){1.5}}
\put(1.7,1.8){\circle*{0.1}}
\put(3.2,1.8){\circle*{0.1}}
\put(1.7,1.8){\circle{0.15}}
\put(3.2,1.8){\circle{0.15}}
\put(2.9,0.5){\line(1,1){0.3}}
\put(3.2,0.8){\circle*{0.1}}
\put(3.2,0.8){\circle{0.15}}
\put(3.2,0.8){\line(0,1){1}}
\multiput(1.4,0.5)(0.17,0.17){2}{\line(1,1){0.15}}
\multiput(1.7,0.8)(0.14,0){11}{\line(1,0){0.1}}
\multiput(1.7,0.8)(0,0.14){7}{\line(0,1){0.1}}
\put(1.7,0.8){\circle*{0.1}}
\put(1.7,0.8){\circle{0.15}}
\put(1.55,1.15){\vector(-1,0){0.3}}
\put(3.05,1.15){\vector(1,0){0.3}}
\put(0.9,1.2){$ F'_{K,i}$}
\put(3.4,1.15){$ F''_{K,i}$}

\linethickness{0.6mm}
\end{picture}
\end{center}
\caption{ degrees of freedom for the Adini element}
\end{figure}

Let $\alpha$ denote the multiple-index with $\alpha=(\alpha_1,\cdots,\alpha_d)$, $\alpha_i(1\leq i\leq d)$ are nonnegative integers, and $|\alpha|=\sum\limits_{i=1}^d\alpha_i$,  $x^{\alpha}=\prod\limits_{i=1}^d x_i^{\alpha_i}$. The partial derivative operator can be written as
\begin{equation*}
\partial^{\alpha}=\frac{\partial^{|\alpha|}}{\partial x_1^{\alpha_1}\cdots x_d^{\alpha_d}}.
\end{equation*}
Let $e_i$ $( 1\leq i\leq d)$ be the $d-$dimensional unit multi-index with its $i-$th entry equal to 1.


%
%
\subsection{Structural properties of the Adini element}

Given $\Omega$ a  $d$-dimensional domain, $\mathcal{T}_h$ is a triangulation on $\Omega$, and $K\in \mathcal{T}_h$. Let $\Pi^1_K$ be piecewise bilinear interpolation operator on $K$, namely $\Pi^1_K v\in Q_1(K)$ and $(\Pi^1_K v)(P)=v(P),\;\text{for any vertex}\; P\;of\;K$, and $v\in C(K)$.  Define on $C(K)$ the operator $\mathcal{R}^1_K:=Id-\Pi_K^1$, with $Id$ being the identity operator. Define $\Pi_{0,K}w=\frac{1}{|K|}\int_Kw\mathrm{d}x$, for any $w\in L^2(K)$. The global version $\Pi_0$ of the interpolation operator $\Pi_{0,K}$ is defined as
\begin{equation} \Pi_0|_K=\Pi_{0,K},\;\text{for any}\;K \in \mathcal{T}_h. \end{equation}

\begin{lem}\label{lem8}
It holds for $w_h\in P_A(K)$ that
\begin{equation}
\mathcal{R}^1_K\frac{\partial w_h}{\partial x_i}\bigg|_{F'_{K,i}}=\mathcal{R}^1_K\frac{\partial w_h}{\partial x_i}\bigg|_{F''_{K,i}},\;1\leq i\leq d.\label{eq8}
\end{equation}
\end{lem}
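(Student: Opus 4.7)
The plan is to use linearity of $\mathcal{R}_K^1$ and of $\partial/\partial x_i$, combined with the decomposition
\[ P_A(K)=Q_1(K)+\sum_{k=1}^{d}\text{span}\{x_k^2 q : q\in Q_1(K)\}, \]
to reduce the claim to three cases for the test function $w_h$: (a) $w_h\in Q_1(K)$; (b) $w_h=x_k^2 q$ with $k\ne i$, $q\in Q_1(K)$; (c) $w_h=x_i^2 q$ with $q\in Q_1(K)$.

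The main structural step I will use is the following observation. Because $\Pi_K^1 g$ is the multilinear interpolant at the $2^d$ vertices of $K$, its restriction to either face $F'_{K,i}$ or $F''_{K,i}$ is the $(d-1)$-dimensional multilinear interpolation at the $2^{d-1}$ vertices lying in that face. Identifying the two faces via the coordinates $\hat x_i=(x_1,\dots,x_{i-1},x_{i+1},\dots,x_d)$, one obtains
\[ \mathcal{R}_K^1 g\big|_{F''_{K,i}}-\mathcal{R}_K^1 g\big|_{F'_{K,i}}=(Id-\tilde\Pi^1)\bigl(g|_{F''_{K,i}}-g|_{F'_{K,i}}\bigr), \]
where $\tilde\Pi^1$ denotes the $(d-1)$-dimensional multilinear interpolation onto $Q_1(\hat x_i)$. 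Thus \eqref{eq8} reduces to showing that, for $g=\partial w_h/\partial x_i$, the trace jump $g|_{F''_{K,i}}-g|_{F'_{K,i}}$ already lies in $Q_1(\hat x_i)$.

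To verify this trace-jump property I will transport to the reference cube $[-1,1]^d$ via $x_j=x_{j,c}+h_j\xi_j$. Cases (a) and (b) are immediate: in both, $\partial w_h/\partial x_i$ has no $x_i$-dependence at all, so the trace jump is identically zero. For case (c), writing $q=q_0(\hat\xi_i)+\xi_i q_1(\hat\xi_i)$ with $q_0,q_1$ multilinear in $\hat\xi_i$, a direct computation gives $\partial_i(\xi_i^2 q)=2\xi_i q_0+3\xi_i^2 q_1$, whose jump between $\xi_i=1$ and $\xi_i=-1$ equals $4q_0\in Q_1(\hat\xi_i)$.

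The only delicate point is case (c); an equivalent route that bypasses the face-reduction is to observe directly that $\Pi_K^1(\xi_i^2 q_1)=q_1$ since $\xi_i^2=1$ at every vertex, hence $\mathcal{R}_K^1(\xi_i^2 q_1)=(\xi_i^2-1)q_1$, which vanishes on both $\xi_i=\pm 1$. Either way the identity \eqref{eq8} follows.
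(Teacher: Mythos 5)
Your proof is correct, but it is organized around a different device than the paper's. The paper computes explicitly that $\frac{\partial w_h}{\partial x_i}\in Q_1(K)+\mathrm{span}\{(x_j-x_{j,c})^2\,\widehat q,\ \widehat q\in Q_1^i(K)\}$ and then observes that $\mathcal{R}^1_K\frac{\partial w_h}{\partial x_i}$ lands in $\mathrm{span}\{((x_j-x_{j,c})^2-h_j^2)\,\widehat q\}$, each of whose members takes identical values on $F'_{K,i}$ and $F''_{K,i}$ (independent of $x_i$ when $j\neq i$, vanishing on both faces when $j=i$); the symmetry of the residual is thus read off directly from an explicit description of its range. You instead prove the lemma through the jump identity
\[
\mathcal{R}_K^1 g\big|_{F''_{K,i}}-\mathcal{R}_K^1 g\big|_{F'_{K,i}}=(Id-\tilde\Pi^1)\bigl(g|_{F''_{K,i}}-g|_{F'_{K,i}}\bigr),
\]
which rests on the (correct) fact that the multilinear interpolant commutes with restriction to a face, and then reduce to checking that the trace jump of $\partial w_h/\partial x_i$ is multilinear in the remaining variables; the case-by-case verification on the reference cube is fine, the only delicate case (c) giving jump $4q_0\in Q_1(\hat\xi_i)$. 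This buys you a criterion (jump of $g$ modulo $Q_1(\hat x_i)$) that avoids computing $\mathcal{R}^1_K\partial_i w_h$ explicitly, at the mild cost of the extra face-restriction lemma and of a small bookkeeping point you gloss over: your cases (a)--(c) are stated in the physical variables but verified in reference variables, and $x_i^2q$ does not transform literally into $\xi_i^2q$; this is harmless because the transported space has the same form and the criterion is linear, but it deserves one sentence. Note also that your closing aside --- $\Pi_K^1(\xi_i^2 q_1)=q_1$, hence $\mathcal{R}^1_K(\xi_i^2q_1)=(\xi_i^2-1)q_1$ vanishes on both faces --- is essentially the paper's own argument, so the two routes meet there.
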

\begin{proof}
Given $w_h \in P_A(K)$, a direct calculation leads to that

\begin{equation}
\frac{\partial w_h}{\partial x_i}\in Q_1(K)+{\rm span}\left\{(x_j-x_{j,c})^2\cdot\widehat{q},\ \widehat{q}\in Q_1^i(K),1\leq j\leq d\right\},
\end{equation}
where $Q_1^i(K):={\rm span}\{(x-x_c)^\alpha\}_{|a_j|\leq 1,\alpha_i=0}$. Further,
\begin{equation}\label{eq40}
 \mathcal{R}^1_K\big(\frac{\partial w_h}{\partial x_i}\big)\in S^i_K,\; S^i_K={\rm span}\{((x_j-x_{j,c})^2-h^2_j)\cdot \widehat{q}, 1\leq j\leq d, \widehat{q}\in Q^i_1(K) \}.
\end{equation}
Noting that $(x_j-x_{j,c})^2$, $1\leq j\leq d$, evaluate the same on $F'_{K,i}$ and $F''_{K,i}$, we obtain \eqref{eq8}. This finishes the proof.

\end{proof}
For ease of expression, we define the following sets
\begin{equation*}
\begin{split}
M_{i,j}&=\{(\alpha_1,\cdots,\alpha_d)|\;\alpha_i=1,\;2\leq \alpha_j\leq 3,\;\alpha_k\leq 1,\;k\neq i,j\},
\\
M'_{1,j}&=\{(\alpha_1,\alpha_2,\alpha_3)|\;\alpha_1=1,\;2\leq \alpha_j\leq 3,\;\alpha_k\leq 1,\;2\leq k\neq j\leq 3\}.
\end{split}
\end{equation*}
By means of (\ref{eq40}), on $F'_{K,i}$, $F''_{K,i}$ of the element $K$, we can get that
\begin{equation}
\label{eq:expansionRKface}
\begin{split}
&\left(\mathcal{R}^1_{K}\frac{\partial w_h}{\partial x_i}\right)(x_1,\cdots,x_{i-1},x_{i,c}\pm h_i,x_{i+1},\cdots,x_d)\\=&\sum_{\substack{1\leq j\leq d\\ j\neq i}}\sum_{\alpha\in M_{i,j}}B{^K_i}(j,\alpha)\Pi_{0,K}(\partial^{\alpha}w_h),
 \end{split}
 \end{equation}
with
\begin{equation}
B^K_i(j,\alpha)=\frac{1}{\alpha_j!}\big[(x_j-x_{j,c})^{\alpha_j}-h_j^2(x_j-x_{j,c})^{\alpha_j-2}\big](x-x_c)^{\alpha-e_i-\alpha_je_j},
\end{equation}
\begin{equation}\label{eq54}
\partial^{\alpha}w_h\in\text{span}\{1,x_j\},\; \text{if}\;\alpha_j=2,\;\text{and}\; \partial^{\alpha}w_h\; \text{is constant, if}\;\alpha_j=3.
\end{equation}
Noticing that $\partial_{x_i}B^K_i(j,\alpha)=0$, if $i\neq j$, and $\Pi_{0,K}(\partial^{\alpha}w_h)$ are constant.\\

For example, in two-dimensional case,
\begin{equation*}
\begin{split}
\mathcal{R}^1_{K}\frac{\partial w_h}{\partial x_1}(x_{1,c}\pm h_1,x_2)=&\frac{1}{2}[(x_2-x_{2,c})^2-h_2^2]\Pi_{0,K}\frac{\partial^3w_h}{\partial x_1\partial x_2^2}
\\
&+\frac{1}{6}[(x_2-x_{2,c})^3-h_2^2(x_2-x_{2,c})]\frac{\partial^4w_h}{\partial x_1\partial x_2^3},
\end{split}
\end{equation*}
and in three-dimensional case
\begin{equation*}
\begin{split}
&\mathcal{R}^1_{K}\frac{\partial w_h}{\partial x_1}(x_{1,c}\pm h_1,x_2,x_3)
\\
=&\sum_{j=2}^3\sum_{\alpha'\in M'_{1,j}}\frac{1}{\alpha_j!}[(x_j-x_{j,c})^{\alpha_j}-h_j^2(x_j-x_{j,c})^{\alpha_j-2}](x-x_c)^{\alpha-e_1-\alpha_je_j}\Pi_{0,K}(\partial^{\alpha'}w_h)
\end{split}
\end{equation*}
Given $K\in \mathcal{T}_h$, we define the canonical interpolation operator $\Pi_K : C^1(K)\rightarrow P_A(K)$ by, for any $v \in C^1(K) $,\begin{equation}
 (\Pi_K v)(P)=v(P)\; \text{and}\; (\nabla\Pi_K v)(P)=\nabla v(P),
 \end{equation}\\
for any vertex $P$ of $K$. The interpolation operator $\Pi_K$ has the following error estimates:
\begin{equation}
|v-\Pi_K v|_{l,K}\leq Ch^{4-l}|v|_{4,K},\;l=0,1,2,3,4,
 \end{equation}\\
provided that $v \in H^s(K)$, where $s\geq 4$ and $s>\frac{d}{2}+1$ such that $H^s(K)\subset C^1(K)$, see Remark \ref{rem1}.

\begin{lem}\label{lem4}
  For any $u\in P_4(K)$ and $v\in P_A(K)$, it holds that
  \begin{equation}
(\nabla^2(u-\Pi_Ku),\nabla^2v)_{L^2(K)} = -{\sum_{i=1}^d\sum_{\substack{1\leq j\leq d\\j\neq i}}}\frac{h_j^2}{3}\int_K\frac{\partial^4u}{\partial x_i^2\partial x_j^2}\frac{\partial^2v}{\partial x_i^2}\,\mathrm{d}x,
\end{equation}
  \end{lem}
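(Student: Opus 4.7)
The plan is to reduce the identity, by linearity, to a finite check on monomial representatives, and to settle each case by an explicit computation of $\Pi_K u$ together with a parity argument on the symmetric rectangle $K$. First observe that both sides are linear in $u$ and both vanish for $u\in P_A(K)$: the left side because $\Pi_K$ reproduces $P_A(K)$, and the right side because any $u\in P_A(K)$ has at most one coordinate direction of degree $\ge 2$, forcing $\partial_{ii}\partial_{jj}u=0$ whenever $i\ne j$. Writing $y_k:=x_k-x_{k,c}$, it therefore suffices to verify the identity for $u$ running over a basis of $P_4(K)$ modulo $P_A(K)$, which may be taken to consist of the monomials $y_k^{4}$ ($1\le k\le d$) and $y_i^{2}y_j^{2}$ ($1\le i<j\le d$).

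For each representative I obtain $\Pi_K u$ by a tensor argument: when $u$ depends only on the coordinates indexed by a subset $S$, the lower-dimensional Adini interpolant $\widetilde\Pi_{K_S}u$, extended trivially in the remaining variables, already lies in $P_A(K)$ and matches every vertex DOF, so by unisolvence it coincides with $\Pi_Ku$. This reduces the case $u=y_k^{4}$ to one-dimensional Hermite cubic interpolation on $[-h_k,h_k]$ and yields $u-\Pi_Ku=(y_k^{2}-h_k^{2})^{2}$; it reduces the case $u=y_i^{2}y_j^{2}$ to two-dimensional Adini interpolation, and invoking the evenness of $u$ in both $y_i$ and $y_j$ gives $\Pi_Ku=-h_i^{2}h_j^{2}+h_j^{2}y_i^{2}+h_i^{2}y_j^{2}$, hence $u-\Pi_Ku=(y_i^{2}-h_i^{2})(y_j^{2}-h_j^{2})$.

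With these explicit forms in hand both sides become short integrals on the product domain $K$. For $u=y_k^{4}$ the Hessian of $u-\Pi_Ku$ has only the $\partial_{kk}$ entry $12y_k^{2}-4h_k^{2}$, which is $L^{2}(-h_k,h_k)$-orthogonal to every polynomial at most linear in $y_k$; since the structure of $P_A(K)$ forces $\partial_{kk}v$ to be at most linear in $y_k$ with $Q_1$ coefficients in the remaining variables, the LHS vanishes, and the RHS vanishes by the same reasoning as for $u\in P_A(K)$. For $u=y_i^{2}y_j^{2}$ the Hessian has three nonzero entries and the two diagonal contributions match the RHS exactly after invoking the moment identity $\int_{-h_k}^{h_k}(y_k^{2}-h_k^{2}/3)(a+by_k)\,\mathrm{d}y_k=0$, while the off-diagonal contribution $\int_{K}y_iy_j\partial_{ij}v\,\mathrm{d}x$ is killed by decomposing $v$ under the reflections $y_k\to-y_k$: only the part of $v\in P_A(K)$ even in every variable can contribute to that integral, and such a part lies in $\mathrm{span}\{1,y_{1}^{2},\dots,y_{d}^{2}\}$, on which $\partial_{ij}$ is identically zero for $i\ne j$. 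The main obstacle is strictly bookkeeping, namely writing the structural forms of $\partial_{kk}v$ and $\partial_{ij}v$ for $v\in P_A(K)$ cleanly enough that the moment cancellations become transparent, and tracking the constants so that the $-h_j^{2}/3$ factor on the right arises precisely as the non-vanishing moment of $2(y_j^{2}-h_j^{2})$ against a constant.
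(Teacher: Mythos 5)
Your proposal is correct and follows essentially the same route as the paper: the paper's expansion of $u\in P_4(K)$ into a $P_A(K)$ part plus the terms $\frac{\partial^4u}{\partial x_i^4}(\xi_i^2-1)^2$ and $\frac{\partial^4u}{\partial x_i^2\partial x_j^2}(\xi_i^2-1)(\xi_j^2-1)$ is exactly your monomial reduction with the same explicit interpolation errors, and the diagonal terms are handled by the same $Q_1$-moment orthogonality producing the factor $-h_j^2/3$. The only cosmetic difference is that you kill the mixed term $\int_K y_iy_j\,\partial_{ij}v\,\mathrm{d}x$ by a reflection-parity argument, whereas the paper uses the explicit structural form of $\partial_{ij}v$ for $v\in P_A(K)$ together with the vanishing moments of $(\xi_j^2-1)\xi_i$, $(\xi_j^2-1)\xi_j$ and $(\xi_j^2-1)\xi_i\xi_j$; both arguments are sound.
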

  \begin{proof}
  It follows from the definition of $P_A(K)$ that
\begin{equation}\label{eq20}
  \begin{split}
  \frac{\partial^2 v}{\partial x_i^2}&\in Q_1(K),
  \\
  \frac{\partial^2 v}{\partial x_i\partial x_j}&\in \{p\cdot \tilde{q}\,|\,p\in P_1(x_i,x_j),\;\tilde{q}\in Q_1^{i,j}(K)\}
  \\
  &\;\;\;\;+\text{span}\{x_k^2\cdot \tilde{q},\;\tilde{q}\in Q_1^{i,j}(K),\;1\leq k\leq d\},\;i\neq j,
  \end{split}
  \end{equation}
  where $P_1(x_i,x_j):=\text{span}\{1,\;x_i,\;x_j\}$, $Q_1^{i,j}(K):=\text{span}\{x^{\alpha}\}_{|\alpha_k|\leq 1,\alpha_i=\alpha_j=0}$.

Since $u\in P_4(K)$, we have, with $\xi_i=\frac{x_i-x_{i,c}}{h_i}$,
  \begin{equation}\label{eq21}
  u=u_1+\frac{h_i^4}{4!}\sum_{i=1}^d \frac{\partial^4 u}{\partial x_i^4}\xi_i^4+\frac{h_i^2h_j^2}{4}\sum_{i=1}^d\sum_{\substack{1\leq j\leq d\\j>i}}\frac{\partial^4u}{\partial x_i^2x_j^2}\xi_i^2\xi_j^2,
  \end{equation}
  where $u_1\in P_A(K)$.\\
  The Taylor expansion and the definition of the operator $\Pi_K$ yield
  \begin{equation}\label{eq22}
  u-\Pi_Ku=\frac{h_i^4}{4!}\sum_{i=1}^d \frac{\partial^4 u}{\partial x_i^4}(\xi_i^2-1)^2+\frac{h_i^2h_j^2}{4}\sum_{i=1}^d\sum_{\substack{1\leq j\leq d\\j>i}}\frac{\partial^4u}{\partial x_i^2x_j^2}(\xi_i^2-1)(\xi_j^2-1).
  \end{equation}
  Thus
  \begin{eqnarray*}
  \frac{\partial^2(u-\Pi_Ku)}{\partial x_i^2}&=&\frac{h_i^2}{4!}\frac{\partial^4 u}{\partial x_i^4}(12\xi_i^2-4)+\frac{h_j^2}{2}\frac{\partial^4u}{\partial x_i^2x_j^2}(\xi_j^2-1),\;1\leq i< j\leq d,\\
  \frac{\partial^2(u-\Pi_Ku)}{\partial x_i\partial x_j}&=&h_ih_j\frac{\partial^4u}{\partial x_i^2x_j^2}\xi_i\xi_j,\;1\leq i< j\leq d.
  \end{eqnarray*}
  Since
  \begin{equation}
  \int_K(12\xi_i^2-4)q_k\mathrm{d}x=0,\;q_k\in Q_1(K),\;\;1\leq k\neq i\leq d,
  \end{equation}
  and
 \begin{equation}
 \int_K(\xi_j^2-1)\xi_i\mathrm{d}x=0,\; \int_K(\xi_j^2-1)\xi_j\mathrm{d}x=0,\; \int_K(\xi_j^2-1)\xi_i\xi_j\mathrm{d}x=0,\;1\leq i\neq j\leq d,
 \end{equation}
 a combination of (\ref{eq20}) and (\ref{eq22}) and some elementary calculation yield
 \begin{equation}
 \int_K\frac{\partial^2(u-\Pi_Ku)}{\partial x_i^2} \frac{\partial^2 v}{\partial x_i^2}\;\mathrm{d}x=-\frac{h_j^2}{3}\int_K\frac{\partial^4u}{\partial x_i^2\partial x_j^2}\frac{\partial^2v}{\partial x_i^2}\,\mathrm{d}x,\;1\leq i\neq j\leq d.
 \end{equation}
 By the same argument, it yields
 \begin{equation}
 \int_K \frac{\partial^2(u-\Pi_Ku)}{\partial x_i\partial x_j} \frac{\partial^2 v}{\partial x_i\partial x_j}\mathrm{d}x=0,
 \end{equation}
which completes the proof.
\end{proof}

\section{The capacity of Adini element for fourth order problems}

\subsection{Model problem and finite element discretisation}\label{sec:model problem}

Let $\Omega\subset\mathbb{R}^d$ be a bounded domain with Lipschitz boundary. In this paper, We consider the model fourth order elliptic problem:
\begin{eqnarray}
&&\left\{
\begin{array}{lll}
\Delta^2 u=f,\quad \mbox{in} \;\Omega,\\
u=\frac{\partial u}{\partial n}=0,\quad \mbox{on}\; \partial\Omega. \label{eq3}
\end{array}
\right.\\\nonumber
\end{eqnarray}
The variational formulation is, given $f\in H^{-2}(\Omega)$, to find $u\in V:=H^2_0(\Omega)$, such that
\begin{equation}
 a_\Omega(u,v)=(f,v),\;\text{for  any}\;\; v \in V. \label{eq7}
\end{equation}
where $\displaystyle a_{\Omega}(u,v):=\sum_{i,j=1}^d\int_{\Omega}\partial_{ij}u\partial_{ij}v$ for $u,v\in H^2(\Omega)$.

Let $\mathcal{T}_h$ be a regular $d$-rectangle triangulation of the domain $\Omega$. Define the Adini element space in a standard way by
\begin{eqnarray*}
V_h&:=&\{v \in L^2(\Omega):\;v|_K \in P_A(K),\; \forall K \in \mathcal{T}_h, \;v\;\mbox{and}\;\nabla v\;\mbox{is\;continuous\;at}\\&&\mbox{\;all\;internal\;vertices}\},
\end{eqnarray*}
and associated with the boundary condition,
\begin{eqnarray*}
V_{h0}&:=&\{v_h \in V_{h}:\ v_h\ \mbox{and}\ \nabla v_h\ \mbox{vanishes\;at\;all\;boundary\;vertices}\}.
\end{eqnarray*}
Evidently, $V_h\subset H^1(\Omega)$ and $V_{h0}\subset H^1_0(\Omega)$ (\cite{ShiWang}). However, $V_h\not\subset H^2(\Omega)$, and $V_{h0}\not\subset H^2_0(\Omega)$. Evidently, $P_3(K)\subset P_A(K)$ for any $K\in \mathcal{T}_h$. By the standard technique,
\begin{equation}  \label{eq18}
\inf_{v_h\in V_{h0}}|v- v_h|_{l,h}\leq Ch^{4-l}|v|_{4,\Omega},\;l=0,1,2,3,4,
 \end{equation}
for any $v \in H^4(\Omega)$. Herein and throughout this paper, $C$ denotes a generic positive constant which is independent of the meshsize and may be different at different places.

Associated with the model problem, the Adini finite element problem is to find $u_h \in V_{h0}$, such that
\begin{equation}
a_h(u_h,v_h)=(f,v_h)_{L^2(\Omega)},\; \text{for any}\; v_h \in V_{h0}, \label{eq5}
\end{equation}
where $\displaystyle a_h(u_h,v_h):= \sum_{K\in\mathcal{T}_h}a_K(u_h,v_h)$.

Define a semi-norm over $V_h$ by
$|u_h|_h^2:=\sum_{K\in\mathcal{T}_h}\|\nabla^2u_h\|_{0,K}^2$. By Poincare inequality, $|\cdot|_h$ is a norm on $V_{h0}$, and it is equivalent to $\|\cdot\|_{h}$, while the latter denotes the piecewise $H^2$ norm.

%
%
\subsection{Error analysis in energy norm}
\label{sec:energynorm}

In this section, we present an upper bound of the energy norm of the error of the finite element scheme \eqref{eq5}. A main result of this paper is the theorem below.
\begin{thm}\label{thm:enesti}
Let $u$ and $u_h$ be the solutions of $(\ref{eq3})$ and \eqref{eq5}, respectively. Assume that $u\in H^4(\Omega)$. Then
\begin{equation}\label{eq:enesti}
  |u-u_h|_{2,h}\leq Ch^2|u|_{4,\Omega}.
\end{equation}
\end{thm}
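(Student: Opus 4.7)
The plan is to follow the classical Strang strategy,
\[
|u-u_h|_{2,h}\le C\Bigl(|u-\Pi_h u|_{2,h}+\sup_{0\neq w_h\in V_{h0}}\frac{|E_h(u,w_h)|}{|w_h|_{2,h}}\Bigr),
\]
with consistency error $E_h(u,w_h):=a_h(u,w_h)-(f,w_h)$. The approximation term is bounded by $Ch^2|u|_{4,\Omega}$ immediately from \eqref{eq18}, so the whole proof reduces to the sharp consistency estimate $|E_h(u,w_h)|\le Ch^2|u|_{4,\Omega}|w_h|_{2,h}$.

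For the consistency error I would integrate by parts elementwise. Since $V_h\subset H^1(\Omega)$ and $\nabla w_h$ is continuous at internal vertices (while $w_h,\nabla w_h$ vanish at boundary vertices), every term involving $w_h$ itself or a tangential derivative of $w_h$ cancels across internal faces or vanishes on $\partial\Omega$; only the normal-normal contribution survives:
\[
E_h(u,w_h)=\sum_{K\in\mathcal{T}_h}\sum_{k=1}^d\Bigl[\int_{F''_{K,k}}-\int_{F'_{K,k}}\Bigr]\partial_{kk}u\cdot\partial_k w_h\,dS.
\]
Writing $\partial_k w_h=\Pi^1_K\partial_k w_h+\mathcal{R}^1_K\partial_k w_h$, the bilinear-interpolant piece $\Pi^1_K\partial_k w_h$ is determined by the vertex values of $\nabla w_h$, hence is continuous across every internal face and identically zero on every boundary face, so its net contribution vanishes. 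For the remainder, Lemma \ref{lem8} gives $\mathcal{R}^1_K\partial_k w_h|_{F''_{K,k}}\equiv\mathcal{R}^1_K\partial_k w_h|_{F'_{K,k}}$ as functions of the tangential coordinates, so pairing the two opposite faces of the same $K$ and applying the fundamental theorem of calculus in $x_k$ recasts the consistency error as
\[
E_h(u,w_h)=\sum_K\sum_k\int_K\tilde g_K(x')\,\partial_{kkk}u(x)\,dx,
\]
where $\tilde g_K(x')$ is the common face value $\mathcal{R}^1_K\partial_k w_h|_{F''_{K,k}}$, trivially extended as a constant in $x_k$.

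The main obstacle is that a direct Cauchy--Schwarz on this volume integral yields only $\mathcal{O}(h)|u|_{3,K}|w_h|_{2,K}$, so an extra power of $h$ together with the replacement of $|u|_3$ by $|u|_4$ must come from the algebraic structure of $\tilde g_K$. Using the explicit expansion \eqref{eq:expansionRKface}, I would write $\tilde g_K=\sum_{j,\alpha}B^K_k(j,\alpha)\Pi_{0,K}(\partial^\alpha w_h)$ and treat each term. For the leading index $\alpha=e_k+2e_j$ I would split $B^K_k(j,\alpha)=\Pi_{0,K}B+(B-\Pi_{0,K}B)$ with $\Pi_{0,K}B=-h_j^2/3$: the fluctuation is $L^2(K)$-orthogonal to $P_1(K)$, so integrating it against $\partial_{kkk}u-\Pi_{0,K}\partial_{kkk}u$ gains the extra $h$ via Poincar\'e; the constant piece, after reassembling $\sum_K\Pi_{0,K}(\partial_k\partial_j^2 w_h)\int_K\partial_{kkk}u\,dx$ as the global integral $\int_\Omega\partial_k\partial_j^2 w_h\cdot\partial_{kkk}u\,dx$ up to a Poincar\'e quadrature error, allows one further integration by parts in $x_k$: the face terms vanish because the tangential second derivative $\partial_j^2 w_h$ is continuous across internal faces and zero on boundary faces (using \eqref{eq54} and the $V_{h0}$ condition), leaving $-\tfrac{h_j^2}{3}\int_\Omega\partial_{kkkk}u\cdot\partial_j^2 w_h\,dx$, bounded by $Ch^2|u|_{4,\Omega}|w_h|_{2,h}$. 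For $|\alpha|\ge 4$, parity in $x_j$ forces $\int_K B^K_k(j,\alpha)\,dx=0$, and Poincar\'e combined with the inverse estimate on $P_A(K)$ furnishes $\mathcal{O}(h^2)$ per term. Summing over $K,k,j,\alpha$ and combining with the approximation estimate in Strang's inequality concludes the proof. The essential difficulty is precisely this extraction of $h^2$ rather than $h$, which marries the symmetry of Lemma \ref{lem8} with the polynomial structure \eqref{eq54} of $\partial^\alpha w_h$ and a global integration by parts.
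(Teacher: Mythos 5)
Your proposal is correct and follows essentially the same route as the paper: Strang's lemma, reduction of the consistency error to the normal--normal face jumps, replacement of $\partial_k w_h$ by $\mathcal{R}^1_K\partial_k w_h$, Lemma \ref{lem8} together with the expansion \eqref{eq:expansionRKface} and the fundamental theorem of calculus to pass to a volume integral against the third derivative of $u$, and then the extra power of $h$ extracted from zero-mean/orthogonality arguments, Poincar\'e and inverse estimates, plus one further integration by parts whose face terms cancel by tangential continuity of $w_h$ and vanish on the boundary. The only difference is bookkeeping: you split the weight $B^K_k(j,\alpha)$ into its mean and fluctuation (integrating by parts only the leading constant part and killing the $|\alpha|\ge 4$ terms by parity of $B^K_k(j,\alpha)$), whereas the paper splits $\Pi_{0,K}\partial^{\alpha}w_h$ versus $\partial^{\alpha}w_h$ into the terms $L^K_{i,1}$ and $L^K_{i,2}$ --- an equivalent argument of the same length.
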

\begin{proof}
By the second Strang Lemma, we have
\begin{equation}
|u-u_h|_h\leq C\big(\inf_{v\in V_{h0}}|u-v|_h+\sup_{w_h\in V_{h0}}\frac{|E_h(u,w_h)|}{|w_h|_h} \big), \label{eq6}
\end{equation}
where
\begin{eqnarray}
E_h(u,w_h)&:=&a_h(u,w_h)-(f,w_h) = \sum_{K\in \mathcal{T}_h}\int_{\partial K}\frac{\partial^2 u}{\partial n^2}\frac{\partial w_h}{\partial n}\mathrm{d}s.
\end{eqnarray}
The first term of (\ref{eq6}) is the approximation error and the second one is the consistency error.


We shall consider separately the faces orthogonal to the $x_i$ axes $(1\leq i\leq d)$, namely we rewrite the consistency error to
\begin{equation}
E_h(u,w_h)=\sum_{i=1}^d E_{x_i}(u,w_h),
\end{equation}
with
\begin{eqnarray}
E_{x_i}(u,w_h)&=&\sum_{K\in \mathcal{T}_h}\int_{\partial K}\frac{\partial^2 u}{\partial n^2}\frac{\partial w_h}{\partial x_i}n_{x_i}\mathrm{d}s\nonumber
\\
&=&\sum_{K\in\mathcal{T}_h}\int_{\partial K}\frac{\partial^2 u}{\partial n^2}\mathcal{R}_K^1\frac{\partial w_h}{\partial x_i}n_{x_i}\mathrm{d}s\nonumber
\\
&=&  \sum_{K\in\mathcal{T}_h}(\int_{F_{K,i}''}-\int_{F_{K,i}'}) \frac{\partial^2u}{\partial x_i^2}\mathcal{R}_K^1\frac{\partial w_h}{\partial x_i}ds\nonumber
\\
&:=& \sum_{K\in\mathcal{T}_h}I_i^K(\frac{\partial^2 u}{\partial x_i^2},\mathcal{R}^1_K\frac{\partial w_h}{\partial x_i}),
\end{eqnarray}
where $n_{x_i}$ is the unit outward normal parallel to the $x_i$ axe.\\
Let $K\in\mathcal{T}_h$ and denote $g=\frac{\partial^2u}{\partial x_i^2}|_K$. It holds that, by \eqref{eq:expansionRKface},
 \begin{equation}\label{eq50}
 \begin{split}
 I_i^K(g,\mathcal{R}^1_K\frac{\partial w_h}{\partial x_i})&=\bigg(\int_{F_{K,i}''}-\int_{F_{K,i}'}\bigg)g\mathcal{R}^1_K\frac{\partial w_h}{\partial x_i}\mathrm{d}s
\\
&=\bigg(\int_{F''_{K,i}}-\int_{F'_{K,i}}\bigg)g\sum_{\substack{1\leq j\leq d\\ j\neq i}}\sum_{\alpha\in M_{i,j}}B^K_i(j,\alpha)\Pi_{0,K}\partial^{\alpha}w_h\mathrm{d}s
\\
&=\int_K\frac{\partial g}{\partial x_i}\sum_{\substack{1\leq j\leq d\\ j\neq i}}\sum_{\alpha\in M_{i,j}}B^K_i(j,\alpha)\Pi_{0,K}\partial^{\alpha}w_h\mathrm{d}x
\\
&=\int_K\frac{\partial g}{\partial x_i}\sum_{\substack{1\leq j\leq d\\ j\neq i}}\sum_{\alpha\in M_{i,j}}B^K_i(j,\alpha)\partial^{\alpha}w_h\mathrm{d}x
\\
&\;\;+\int_K\frac{\partial g}{\partial x_i}\sum_{\substack{1\leq j\leq d\\ j\neq i}}\sum_{\alpha\in M_{i,j}}B^K_i(j,\alpha)(\Pi_{0,K}-Id)\partial^{\alpha}w_h\mathrm{d}x
\\&:=L^K_{i,1}+L^K_{i,2}.
\end{split}
\end{equation}
Integrating by parts yields
\begin{equation}
\begin{split}
L^K_{i,1}=&\int_K\frac{\partial g}{\partial x_i}\sum_{\substack{1\leq j\leq d\\ j\neq i}}\sum_{\alpha\in M_{i,j}}B^K_i(j,\alpha)\partial^{\alpha}w_h\mathrm{d}x
\\
=&-\int_K\frac{\partial^2g}{\partial x_i^2}\sum_{\substack{1\leq j\leq d\\ j\neq i}}\sum_{\alpha\in M_{i,j}}B^K_i(j,\alpha)\partial^{\alpha-e_i}w_h\mathrm{d}x
\\
&+\bigg(\int_{F''_{K,i}}-\int_{F'_{K,i}}\bigg)\frac{\partial g}{\partial x_i}\sum_{\substack{1\leq j\leq d\\ j\neq i}}\sum_{\alpha\in M_{i,j}}B^K_i(j,\alpha)\partial^{\alpha-e_i}w_h\mathrm{d}s.
\end{split}
\end{equation}
Since $u\in H^4(\Omega)$, $w_h\in H^1(\Omega)$ and $\partial^{\alpha-e_i} w_h$, ($\alpha\in M_{i,j}$) are  tangential derivatives of the faces that orthogonal to the axe $x_i$, thus $\frac{\partial^3u}{\partial x_i^3}$ and $\partial^{\alpha-e_i} w_h$, ($\alpha\in M_{i,j}$) are continuous across faces $F'_{K,i}$, $F''_{K,i}$, we obtain
\begin{equation}
\begin{split}
\sum_{K\in\mathcal{T}_h}L^K_{i,1}&=-\sum_{K\in\mathcal{T}_h}\int_K\frac{\partial^4u}{\partial x_i^4}\sum_{\substack{1\leq j\leq d\\ j\neq i}}\sum_{\alpha\in M_{i,j}}B^K_i(j,\alpha)\partial^{\alpha-e_i}w_h\mathrm{d}x
\\
&\leq Ch^{|\alpha|-1}\sum_{K\in \mathcal{T}_h}|u|_{4,K}|\partial^{\alpha-e_i}w_h|_{0,K},
\end{split}
\end{equation}
where we have used the fact that $\max\limits_{j}\max\limits_{x\in K}B^K_i(j,\alpha)\leq Ch^{|\alpha|-1}$.

A further application of inverse estimate yields
\begin{eqnarray}\label{eq52}
\sum_{K\in\mathcal{T}_h}L^K_{i,1}&\leq& Ch^2\sum_{K\in \mathcal{T}_h}|u|_{4,K}|\nabla^2w_h|_{0,K}\nonumber
\\
&\leq&Ch^2|u|_{4,\Omega}|w_h|_{2,h}.
\end{eqnarray}
%
Then, we estimate the second term of (\ref{eq50}) $L^K_{i,2}$.
\begin{equation}
\begin{split}
L^K_{i,2}=&\int_K\frac{\partial g}{\partial x_i}\sum_{\substack{1\leq j\leq d\\ j\neq i}}\sum_{\alpha\in M_{i,j}}B^K_i(j,\alpha)(\Pi_{0,K}-Id)\partial^{\alpha}w_h\mathrm{d}x
\\
=&\int_K(Id-\Pi^K_0)\frac{\partial g}{\partial x_i}\sum_{\substack{1\leq j\leq d\\ j\neq i}}\sum_{\alpha\in M_{i,j}}B^K_i(j,\alpha)(\Pi_{0,K}-Id)\partial^{\alpha}w_h\mathrm{d}x
\\
&+\int_K\Pi_{0,K}(\frac{\partial g}{\partial x_i})\sum_{\substack{1\leq j\leq d\\ j\neq i}}\sum_{\alpha\in M_{i,j}}B^K_i(j,\alpha)(\Pi_{0,K}-Id)\partial^{\alpha}w_h\mathrm{d}x.
\end{split}
\end{equation}
According to (\ref{eq54}), since
\begin{equation*}
\int_K[(x_j-x_{j,c})^2-h_j^2](\Pi_{0,K}-Id)(c_1+c_2x_j)\;\mathrm{d}x=0,\;c_1,\;c_2\;\text{are constant coefficients},
\end{equation*}
and
\begin{equation*}
\int_K[(x_j-x_{j,c})^3-h_j^2(x_j-x_{j,c})](\Pi_{0,K}-Id)c_3\;\mathrm{d}x=0,\; c_3\text{ is a constant},
\end{equation*}
thus, we can get that
\begin{equation}
\int_K\Pi_{0,K}(\frac{\partial g}{\partial x_i})\sum_{\substack{1\leq j\leq d\\ j\neq i}}\sum_{\alpha\in M_{i,j}}B^K_i(j,\alpha)(\Pi_{0,K}-Id)\partial^{\alpha}w_h\mathrm{d}x=0.
\end{equation}
The interpolation error estimate and inverse estimate yield
\begin{equation}
\begin{split}
&\int_K(Id-\Pi_{0,K})\frac{\partial g}{\partial x_i}\sum_{\substack{1\leq j\leq d\\ j\neq i}}\sum_{\alpha\in M_{i,j}}B^K_i(j,\alpha)(\Pi_{0,K}-Id)\partial^{\alpha}w_h\mathrm{d}x\\
&\leq Ch|g|_{2,K}h^{|\alpha|-1}|\partial^{\alpha}w_h|_{0,K}
\leq Ch^{|\alpha|}|u|_{4,K}|\partial^{\alpha}w_h|_{0,K}
\\
&\leq Ch^2|u|_{4,K}|\nabla^2w_h|_{0,K}.
\end{split}
\end{equation}
Then, we can get
\begin{equation}\label{eq51}
L^K_{i,2}\leq Ch^2|u|_{4,K}|\nabla^2w_h|_{0,K}.
\end{equation}
A combination of (\ref{eq52}) and (\ref{eq51}) leads to
\begin{equation}
\begin{split}
E_{x_i}(u,w_h)=&\sum_{K\in\mathcal{T}_h}I_i^K(\frac{\partial^2 u}{\partial x_i^2},\mathcal{R}^1_K\frac{\partial w_h}{\partial x_i})
\leq Ch^2|u|_{4,\Omega}|w_h|_{2,h}.
\end{split}
\end{equation}
Similarly we obtain further,
\begin{equation}\label{eq43}
E_h(u,w_h)\leq Ch^2|u|_{4,\Omega}|w_h|_{2,h}.
\end{equation}
This, combined with the approximation error estimate, finishes the proof.
\end{proof}
\begin{rem}
Compared with \cite{LascauxLesaint} and \cite{ShiChen}, we prove the $\mathcal{O}(h^2)$ energy norm convergence rate without assuming the uniformity of the meshes. Besides, we only need the lowest regularity assumption $u\in H^4(\Omega)$.
\end{rem}

\subsection{Error analysis of the  Adini element in $L^2$ norm}
\label{sec:l2norm}

In this section, we present the lower bound estimate of the error in $L^2$ norm. This is a generalisation of the result in \cite{HuShi} to arbitrary dimension. The main result of this section is the theorem below.
\begin{thm}
\label{thm:l2err}
 Let $u$ and $u_h$ be solutions of problem (\ref{eq3}) and (\ref{eq5}), respectively. Suppose that $u \in H_0^2(\Omega) \bigcap H^s(\Omega)$, $s\geq 4$ and $s> \frac{d}{2}+1$.  Then, provided $||f||_{L^2(\Omega)}\neq 0$,
\begin{equation}
 ||u-u_h||_{L^2(\Omega)} \geq \beta h^2,
\end{equation}
where $\beta=\delta/||f||_{L^2(\Omega)}$.
\end{thm}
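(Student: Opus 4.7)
The plan is to bound the residual $(f,u-u_h)$ from below by a positive multiple of $h^2$ and then invoke Cauchy--Schwarz: since $\|u-u_h\|_{L^2(\Omega)}\,\|f\|_{L^2(\Omega)}\geq (f,u-u_h)$, an estimate of the form $(f,u-u_h)\geq \delta h^2$ with $\delta=\delta(u)>0$ immediately gives $\|u-u_h\|_{L^2(\Omega)}\geq \beta h^2$ with $\beta=\delta/\|f\|_{L^2(\Omega)}$. The starting point is the algebraic identity
\begin{equation*}
(f,u-u_h) \;=\; |u-u_h|_h^2 \;+\; 2\,E_h(u,u_h),
\end{equation*}
which follows from $(f,u)=a_h(u,u)$ (valid because $u\in H^2_0(\Omega)$), $(f,u_h)=a_h(u_h,u_h)$, and the symmetry of $a_h$, with $E_h(u,v_h):=a_h(u,v_h)-(f,v_h)$ the consistency error. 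Theorem \ref{thm:enesti} yields $|u-u_h|_h^2=O(h^4)$, so the desired $h^2$ contribution must be extracted from $2\,E_h(u,u_h)$.

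I would then split $E_h(u,u_h)=E_h(u,\Pi_h u)+E_h(u,u_h-\Pi_h u)$ using the canonical Adini interpolant $\Pi_h u\in V_{h0}$. Applying the consistency bound $|E_h(u,w_h)|\leq Ch^2|u|_{4,\Omega}|w_h|_h$ from the proof of Theorem \ref{thm:enesti} with $w_h=u_h-\Pi_h u$ and using $|u_h-\Pi_h u|_h\leq |u-u_h|_h+|u-\Pi_h u|_h=O(h^2)$, the second summand is $O(h^4)$ and may be absorbed. Hence $(f,u-u_h)=2E_h(u,\Pi_h u)+O(h^4)$ and the problem reduces to a sharp evaluation of $E_h(u,\Pi_h u)$. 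Revisiting the face expansion \eqref{eq:expansionRKface} with $w_h=\Pi_h u$ and following the split $I_i^K=L_{i,1}^K+L_{i,2}^K$ from the proof of Theorem \ref{thm:enesti}, the $h^2$-contribution is carried by the lowest multi-index $\alpha=e_i+2e_j$, for which $B_i^K(j,\alpha)=\tfrac12[(x_j-x_{j,c})^2-h_j^2]$. An integration by parts on $L_{i,1}^K$ plus an element-wise appeal to Lemma \ref{lem4} applied to the degree-four Taylor polynomial of $u$ on each $K$, with the Taylor residue, the $L_{i,2}^K$ terms and the difference $\partial^\alpha\Pi_K u-\partial^\alpha u$ all shown to be $o(h^2)$ in aggregate, should yield
\begin{equation*}
2E_h(u,\Pi_h u) \;=\; -\frac{2}{3}\sum_{K\in\mathcal{T}_h}\sum_{\substack{1\leq i,j\leq d\\ j\neq i}}h_{K,j}^2\int_K \partial_i^2\partial_j^2 u\cdot \partial_i^2 u\,\mathrm{d}x \;+\; o(h^2).
\end{equation*}

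One integration by parts in $x_j$ recasts each elementary integral as $-\int_\Omega(\partial_i^2\partial_j u)^2\,\mathrm{d}x$, the boundary terms vanishing because on every rectangular face of $\partial\Omega$ either $n_j=0$ (when $j$ is tangential) or $\partial_i^2 u=0$ (when $i$ is tangential, following from $u|_{\partial\Omega}=0$). Thus in the quasi-uniform case the leading part of $2E_h(u,\Pi_h u)$ equals $\tfrac{2h^2}{3}\sum_{i\neq j}\int_\Omega (\partial_i^2\partial_j u)^2\,\mathrm{d}x$, a non-negative quadratic functional of $u$ whose vanishing would force $\partial_i^2\partial_j u\equiv 0$ for every $i\neq j$; combined with $u\in H^2_0(\Omega)$ on the rectangular domain, an elementary integration shows this forces $u\equiv 0$, contradicting $f\not\equiv 0$. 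The resulting strictly positive continuum constant supplies the desired $\delta(u)>0$. The principal obstacle is the uniform $o(h^2)$ control of all residual pieces just enumerated (Taylor remainder in passing from Lemma \ref{lem4} on $P_4(K)$ to $u\in H^4(\Omega)$, the boundary residues from integration by parts inside $L_{i,1}^K$, the $L_{i,2}^K$ contributions, and the interpolation correction $\partial^\alpha\Pi_K u-\partial^\alpha u$); a secondary care is the boundary-term bookkeeping in the final integration by parts, which is clean for rectangular $\partial\Omega$ but essential for the strict positivity conclusion.
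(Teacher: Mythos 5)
Your overall strategy is the same as the paper's: reduce the theorem to a lower bound on the magnitude of the residual, identify its leading term as $\pm 2a_h(u-\Pi_hu,\Pi_hu)$, evaluate that term through the element identity of Lemma \ref{lem4}, and finish by duality against $f$. Your identity $(f,u-u_h)=|u-u_h|_h^2+2E_h(u,u_h)$ is correct and, after discarding $O(h^4)$ pieces, algebraically equivalent to the paper's decomposition \eqref{eq19}. However, there is a concrete sign error that makes the argument, as written, collapse. Since $(f,u)=a_h(u,u)$, $(f,\Pi_hu-u)=O(h^4)$ and $|u-\Pi_hu|_h^2=O(h^4)$, one has $E_h(u,\Pi_hu)=a_h(u,\Pi_hu)-(f,\Pi_hu)=-a_h(u-\Pi_hu,\Pi_hu)+O(h^4)$; and by Lemma \ref{lem4}/Lemma \ref{lem5} (or by your own intended computation, correctly signed) $a_h(u-\Pi_hu,\Pi_hu)=\sum_K\sum_{i\neq j}\tfrac{h_j^2}{3}\|\partial_i^2\partial_j u\|_{0,K}^2+o(h^2)>0$. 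Hence the leading part of $2E_h(u,\Pi_hu)$ is $+\tfrac{2}{3}\sum h_j^2\int_K\partial_i^2\partial_j^2u\,\partial_i^2u\,\mathrm{d}x=-\tfrac{2}{3}\sum h_j^2\|\partial_i^2\partial_ju\|^2<0$, the opposite of your displayed formula, and consequently $(f,u-u_h)\le-\delta h^2$ rather than $\ge\delta h^2$. With a negative right-hand side your Cauchy--Schwarz step $\|u-u_h\|\,\|f\|\ge(f,u-u_h)$ is vacuous. The repair is trivial and is exactly what the paper does: bound $(-f,u-u_h)\ge\delta h^2$ (equivalently, work with $|(f,u-u_h)|$); the magnitude of your leading term is right.

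The second, more substantive issue is that what you call ``the principal obstacle'' is precisely the content of the paper's Lemma \ref{lem5}, and you do not supply the mechanism for it. Passing from Lemma \ref{lem4}, which is an exact identity only for $u\in P_4(K)$ and $v\in P_A(K)$, to an $o(h^2)$-accurate statement for $u\in H^4$ cannot be done with a pointwise degree-four Taylor polynomial: for $u\in H^4(\Omega)$ with $d\ge 3$ the fourth derivatives have no point values, and an ad hoc Taylor remainder estimate would demand extra regularity — avoiding exactly this is the paper's stated motivation. The paper instead uses the moment-fitted projector $P_K$ of \eqref{eq30}, whose commuting property $\nabla^4P_Ku=\Pi_{0,K}\nabla^4u$ in \eqref{eq31} lets every remainder be written with the factor $\|(Id-\Pi_0)\nabla^4u\|_{L^2(\Omega)}$, which tends to zero by density of piecewise constants; this is how the aggregate $o(h^2)$ control is obtained under $u\in H^s$, $s\ge 4$, $s>d/2+1$ only. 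Without this (or an equivalent averaged-Taylor/commuting construction), your reduction to the continuum functional $\sum_{i\neq j}\|\partial_i^2\partial_ju\|_{L^2(\Omega)}^2$ is unproven. Two smaller points: turning $\sum_K h_{K,j}^2\|\partial_i^2\partial_ju\|_{0,K}^2$ into a bound $\ge ch^2$ tacitly uses quasi-uniformity (the paper glosses over this too), and your strict-positivity argument (vanishing of all $\partial_i^2\partial_ju$, $i\neq j$, plus the clamped boundary condition forces $u\equiv0$, contradicting $f\not\equiv0$) is in line with the paper's Remark 3.3 and is fine.
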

\begin{rem}\label{rem1}
By the embedding theorem of the Sobolev space, we need higher regularity of the solution in higher dimensions in order to guarantee $H^s(K)\subset C^1(K)$. Furthermore, it ensures the continuity of interpolation operators.
\end{rem}
\begin{rem}
For the rectangular domain $\Omega$, the condition $||f||_{L^2(\Omega)}\neq 0$ implies that $|\frac{\partial^2 u}{\partial x_i\partial x_j}|_{H^1(\Omega)}\neq 0$, $1\leq i\neq j\leq d$. In fact, if $|\frac{\partial^2 u}{\partial x_i\partial x_j}|_{H^1(\Omega)}=0$, $1\leq i\neq j\leq d$, then $u$ is of the form
\begin{equation*}
u=\sum_{i=1}^d\sum_{\substack{1\leq j\leq d\\j\neq i}}c_{ij}x_ix_j+\sum_{i=1}^d g(x_i),
\end{equation*}
for some function $g(x_i)$ with respect to $x_i$. Then the boundary condition indicates $u\equiv 0$, which contradicts $u\not\equiv 0$.
\end{rem}
We postpone the proof of Theorem \ref{thm:l2err} after several technical lemmas.

Define the global interpolation operator $\Pi_h$ to $V_h$ by
\begin{equation}
 \Pi_h|_K=\Pi_K\;\; \text{for any}\; K \in \mathcal{T}_h.
  \end{equation}
By means of Lemma \ref{lem4}, we can obtain the following crucial result.
\begin{lem} \label{lem5}
For $u \in H_0^2(\Omega) \bigcap H^s(\Omega)$, $s\geq 4$ and $s> \frac{d}{2}+1$, it holds that,
\begin{equation}\label{eq26}
  (\nabla_h^2(u-\Pi_h u),\nabla_h^2 \Pi_h u)_{L^2(\Omega)}\geq \alpha h^2,
 \end{equation}
 for some positive constant $\alpha$, which is independent of the mesh size $h$ provided that $||f||_{L^2(\Omega)}\neq 0$ and that the mesh size is small enough.
\end{lem}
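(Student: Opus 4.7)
The plan is to apply Lemma \ref{lem4} after replacing $u$ on each element by a carefully chosen $P_4$ polynomial, extract the leading $h^2$ contribution and bound the remainder by $o(h^2)$. For each $K\in\mathcal{T}_h$, let $P_Ku\in P_4(K)$ denote a polynomial of degree $\le 4$ whose coefficients of $(x-x_c)^\alpha/\alpha!$ for $|\alpha|=4$ equal the element averages $|K|^{-1}\int_K\partial^\alpha u\,\mathrm{d}x$, with lower-order coefficients chosen via an averaged Taylor construction so that $\|u-P_Ku\|_{H^l(K)}\le Ch^{4-l}|u|_{H^4(K)}$ for $l=0,1,2,3$. Decompose
\begin{equation*}
(\nabla^2(u-\Pi_Ku),\nabla^2\Pi_Ku)_{L^2(K)}=A_K+B_K,
\end{equation*}
where $A_K:=(\nabla^2(P_Ku-\Pi_KP_Ku),\nabla^2\Pi_Ku)_{L^2(K)}$ and $B_K:=(\nabla^2(Id-\Pi_K)(u-P_Ku),\nabla^2\Pi_Ku)_{L^2(K)}$.

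Since $P_Ku\in P_4(K)$ and $\Pi_Ku\in P_A(K)$, Lemma \ref{lem4} gives
\begin{equation*}
A_K=-\sum_{i=1}^d\sum_{j\ne i}\frac{h_{K,j}^2}{3}\int_K\frac{\partial^4 P_Ku}{\partial x_i^2\partial x_j^2}\,\frac{\partial^2\Pi_Ku}{\partial x_i^2}\,\mathrm{d}x,
\end{equation*}
in which $\partial^4 P_Ku/(\partial x_i^2\partial x_j^2)$ is constant on $K$ and equals the $K$-average of $\partial^4 u/(\partial x_i^2\partial x_j^2)$. Restricting for brevity to meshes with $h_{K,j}\equiv h$, one may replace this average by $\partial^4 u/(\partial x_i^2\partial x_j^2)$ and $\partial^2\Pi_Ku/\partial x_i^2$ by $\partial^2 u/\partial x_i^2$; both discrepancies cost only $o(h^2)$, via the $L^2$-density of piecewise constants applied to the $L^2$ fourth derivatives and the interpolation bound $\|\partial_i^2(u-\Pi_hu)\|_{L^2}\le Ch^2|u|_{H^4}$. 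Since $\Omega$ is rectangular and $u\in H^2_0(\Omega)$, $\partial_i^2 u$ vanishes on the faces $x_j=\mathrm{const}$ for $i\ne j$ (by tangentially differentiating the Dirichlet trace twice), so integrating by parts twice in $x_j$ has no boundary contribution and produces
\begin{equation*}
\sum_K A_K=\frac{h^2}{3}\sum_{i\ne j}\left\|\frac{\partial^3 u}{\partial x_i^2\partial x_j}\right\|_{L^2(\Omega)}^2+o(h^2).
\end{equation*}
The remainder $B_K$ is bounded by the standard estimate $\|(Id-\Pi_K)(u-P_Ku)\|_{H^2(K)}\le Ch^2|u-P_Ku|_{H^4(K)}$ combined with Cauchy--Schwarz, giving $|\sum_K B_K|\le Ch^2\bigl(\sum_K|u-P_Ku|^2_{H^4(K)}\bigr)^{1/2}\|u\|_{H^2(\Omega)}$; the parenthesised factor is precisely the $L^2$-error of approximating each $\partial^\alpha u$ ($|\alpha|=4$) by its piecewise average on $\mathcal{T}_h$ and therefore tends to $0$ as $h\to 0$, so $\sum_K B_K=o(h^2)$.

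Finally, strict positivity of $\sum_{i\ne j}\|\partial^3 u/(\partial x_i^2\partial x_j)\|_{L^2}^2$ follows by an argument parallel to the Remark preceding the lemma: if the sum vanished, each $\partial_i^2 u$ would depend only on $x_i$, forcing $u=\sum_i\Phi_i(x_i)+q(x)$ with $q\in Q_1$; the boundary condition $u=\partial_\nu u=0$ on the rectangular $\Omega$ then forces every $\Phi_i$ to be affine and $u\equiv 0$, contradicting $\|f\|_{L^2}\ne 0$. Consequently, for $h$ small enough, $\sum_K A_K$ dominates $\sum_K B_K$, and one obtains \eqref{eq26} with $\alpha$ equal, say, to $\tfrac{1}{6}\sum_{i\ne j}\|\partial^3 u/(\partial x_i^2\partial x_j)\|_{L^2(\Omega)}^2>0$. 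The main obstacle is securing $\sum_K B_K=o(h^2)$ and not merely $\mathcal{O}(h^2)$, which hinges on the specific choice of $P_Ku$ with element-averaged fourth derivatives together with the $L^2$-convergence of those averages to $\partial^\alpha u$.
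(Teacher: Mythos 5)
Your proposal mirrors the paper's own proof essentially step for step: the same moment-matched $P_4(K)$ operator $P_K$ (so that $\nabla^4 P_K u=\Pi_{0,K}\nabla^4 u$), the same splitting into a leading term handled by Lemma \ref{lem4} and a remainder bounded by $Ch^2\|(Id-\Pi_0)\nabla_h^4 u\|_{L^2(\Omega)}=o(h^2)$, the same replacement of averaged fourth derivatives and of $\Pi_K u$ by $u$, the same integration by parts using the clamped boundary data to produce $\tfrac{h^2}{3}\sum_{i\neq j}\|\partial^3 u/(\partial x_i^2\partial x_j)\|^2_{L^2}$, and the same positivity argument from $\|f\|_{L^2(\Omega)}\neq 0$ (parallel to the paper's remark). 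The only deviation is your explicit restriction to $h_{K,j}\equiv h$ in the integration-by-parts step, a point the paper's proof also treats loosely (it keeps the elementwise weights $h_j^2$ and simply asserts the relevant face contributions vanish), so the two arguments are essentially identical.
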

\begin{proof}
Given any element $K$, we follow \cite{HuShi} to define $P_K v \in P_4(K)$ by
\begin{equation}\label{eq30}
 \int_K\nabla^l P_K v\,\mathrm{d}x=\int_K\nabla^l v\,\mathrm{d}x,\;l=0,1,2,3,4,
\end{equation}
 for any $v \in H^s(\Omega)$, ($s\geq 4$ and $s>\frac{d}{2}+1$). Note that the operator $P_K$ is well-defined. The interpolation operator $P_K$ has the following error estimates:
\begin{equation}\label{eq32}
\begin{split}
 |v-P_K v|_{j,K}&\leq Ch^{4-j}|v|_{4,K},\;j=0,1,2,3,4,\\
 |v-P_K v|_{j,K}&\leq Ch|v|_{j+1,K},\;j=0,1,2,3,
 \end{split}
 \end{equation}\\
provided that $v \in H^s(\Omega)$, ($s\geq 4$ and $s>\frac{d}{2}+1$). It follows from the definition of $P_K$ in (\ref{eq30}) that
\begin{equation}\label{eq31}
\nabla^4 P_K v=\Pi_{0,K}\nabla^4 v.
\end{equation}
By the aid of $P_K$, we have the following decomposition
\begin{eqnarray}
(\nabla_h^2(u-\Pi_h u),\nabla_h^2\Pi_h u)_{L^2(\Omega)}&=&\sum_{K\in\mathcal{T}_h} (\nabla_h^2(P_Ku-\Pi_K P_K u),\nabla_h^2\Pi_K u)_{L^2(K)}\nonumber\\
                                                         &&+\sum_{K\in\mathcal{T}_h} (\nabla_h^2(Id-\Pi_K)(Id-P_K)u,\nabla_h^2\Pi_K u)_{L^2(K)}\nonumber\\
                                                       &=&J_1+J_2.  \label{eq28}
\end{eqnarray}

We first analyze the first term $J_1$ on the right-hand side of (\ref{eq28}).
By means of Lemma \ref{lem4}, the first term $J_1$ on the right-hand side of  (\ref{eq28}) can be rewritten as
\begin{equation}
\begin{split}
J_1=&-\sum_{K\in\mathcal{T}_h}\sum_{i=1}^d\sum_{\substack{1\leq j\leq d\\j\neq i}} \frac{h_{j}^2}{3}\int_K\frac{\partial^4P_Ku}{\partial x_i^2\partial x_j^2}\frac{\partial^2\Pi_Ku}{\partial x_i^2}\,\mathrm{d}x
\\
=&-\sum_{K\in\mathcal{T}_h}\sum_{i=1}^d\sum_{\substack{1\leq j\leq d\\j\neq i}} \frac{h_{j}^2}{3}\int_K \frac{\partial^4u}{\partial x_i^2x_j^2}\frac{\partial^2u}{\partial x_i^2}\mathrm{d}x
\\
&+\sum_{K\in\mathcal{T}_h}\sum_{i=1}^d\sum_{\substack{1\leq j\leq d\\j\neq i}} \frac{h_{j}^2}{3}\int_K\frac{\partial^4(Id-P_K)u}{\partial x_i^2\partial x_j^2}\frac{\partial^2\Pi_Ku}{\partial x_i^2}\,\mathrm{d}x
\\
&+\sum_{K\in\mathcal{T}_h}\sum_{i=1}^d\sum_{\substack{1\leq j\leq d\\j\neq i}} \frac{h_{j}^2}{3}\int_K \frac{\partial^4u}{\partial x_i^2x_j^2}\frac{\partial^2(Id-\Pi_Ku)}{\partial x_i^2}\mathrm{d}x.
\end{split}
\end{equation}
Since $\frac{\partial u}{\partial x_j}\big|_{F'_{K,j}}=0$, $\frac{\partial u}{\partial x_j}\big|_{F''_{K,j}}=0$, and $\frac{\partial^3 u}{\partial^2x_i \partial x_j }\big|_{F'_{K,j}}=0$, $\frac{\partial^3 u}{\partial^2x_i \partial x_j }\big|_{F''_{K,j}}=0$, integrating by parts yields
\begin{eqnarray*}
\sum_{K\in\mathcal{T}_h}\sum_{i=1}^d\sum_{\substack{1\leq j\leq d\\j\neq i}} \frac{h_{j}^2}{3}\int_K \frac{\partial^4u}{\partial x_i^2x_j^2}\frac{\partial^2u}{\partial x_i^2}\mathrm{d}x&=
&-\sum_{K\in\mathcal{T}_h}\sum_{i=1}^d\sum_{\substack{1\leq j\leq d\\j\neq i}} \frac{h_{j}^2}{3}\int_K \big(\frac{\partial^3u}{\partial x_i^2x_j}\big)^2\mathrm{d}x
\\
&=&-\sum_{K\in\mathcal{T}_h}\sum_{i=1}^d\sum_{\substack{1\leq j\leq d\\j\neq i}} \frac{h_{j}^2}{3} \big|\big|\frac{\partial^3u}{\partial x_i^2x_j}\big|\big|^2_{L^2(K)}.
\end{eqnarray*}
By the commuting property of (\ref{eq31}),
\begin{equation*}
\frac{\partial^4(Id-P_K)u}{\partial x_i^2\partial x_j^2}=(Id-\Pi_{0,K})\frac{\partial^4u}{\partial x_i^2\partial x_j^2},\;1\leq i\neq j\leq d.
\end{equation*}
Note that
\begin{equation*}
\sum_{i=1}^d\big|\big|\frac{\partial^2\Pi_Ku}{\partial x_i^2}\big|\big|_{L^2(K)}\leq C|u|_{3,K}.
\end{equation*}
This and the error estimate of (\ref{eq32}) yield
\begin{equation}\label{eq33}
J_1=\sum_{K\in\mathcal{T}_h}\sum_{i=1}^d\sum_{\substack{1\leq j\leq d\\j\neq i}}\frac{h_{j}^2}{3} \big|\big|\frac{\partial^3u}{\partial x_i^2x_j}\big|\big|^2_{L^2(K)}+O(h^2)||(Id-\Pi_{0,K})\nabla_h^4u||^2_{L^2(K)}|u|_{3,K}.
\end{equation}
We turn to the second term $J_2$ on the right-hand side of (\ref{eq28}). By the  Poincare inequality, and the commuting property of (\ref{eq31}),
\begin{eqnarray}
|J_2|&\leq & Ch^2 \sum_{K\in\mathcal{T}_h}||\nabla_h^4(Id-P_K)u||_{L^2(K)}|u|_{3,\Omega}\nonumber\\&\leq&Ch^2||(Id-\Pi_0)\nabla_h^4 u||_{L^2(\Omega)}|u|_{3,\Omega}.\label{eq34}
\end{eqnarray}
Since the piecewise constant functions are dense in the space $L^2(\Omega)$,
\begin{equation}\label{eq35}
||(Id-\Pi_0)\nabla^4 u||_{L^2(\Omega)}\rightarrow\; 0,\;\;when\;\;h\rightarrow \;0.
\end{equation}
Summation of (\ref{eq33}), (\ref{eq34}) and (\ref{eq35}) completes the proof.
\end{proof}

Again, the lemma below can be found in \cite{HuShi}.
\begin{lem}
Let $u$ and $u_h$ be solutions of problem (\ref{eq3}) and (\ref{eq5}), respectively. Then,
  \begin{equation}\label{eq19}
  \begin{split}
  (-f,u-u_h)_{L^2(\Omega)}&=a_h(u,\Pi_h u-u_h)-(f,\Pi_h u-u_h)_{L^2(\Omega)}\\&\;\;+a_h(u-\Pi_h u, u-\Pi_h u)+a_h(u-\Pi_h u, u_h-\Pi_h u)\\&\;\;+2(f,\Pi_h u-u)_{L^2(\Omega)}+2a_h(u-\Pi_h u, \Pi_h u).
  \end{split}
  \end{equation}
\end{lem}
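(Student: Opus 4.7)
The identity in question is purely algebraic: the proof is a bookkeeping exercise that uses only two ingredients, namely that $\Pi_h u$ lies in $V_{h0}$ and that the variational/Galerkin principle lets us replace $(f,\cdot)$ by a bilinear pairing in the right circumstances. Concretely, my plan is to introduce the shorthand $\eta:=u-\Pi_h u$ and $\zeta:=\Pi_h u-u_h$, so that $u-u_h=\eta+\zeta$, and then rewrite both sides as the same linear combination of $a_h(\eta,\eta),\ a_h(\zeta,\zeta),\ a_h(\Pi_h u,\eta),\ a_h(\Pi_h u,\zeta)$.

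First I would verify the two preparatory facts. The hypothesis $s>d/2+1$ gives $H^s(\Omega)\hookrightarrow C^1(\bar\Omega)$ (Remark~\ref{rem1}), so that $u$ and $\nabla u$ vanish pointwise at every boundary vertex; since $\Pi_h u$ is defined through these nodal data, $\Pi_h u \in V_{h0}$, and therefore $\zeta\in V_{h0}$ as well. Next, because $u\in H^2_0\cap H^4$ and $a_h$ coincides with $a_\Omega$ on $H^2\times H^2$, the continuous variational equation \eqref{eq7} gives $(f,u)=a_h(u,u)$; the discrete equation \eqref{eq5} then gives $(f,w_h)=a_h(u_h,w_h)$ for every $w_h\in V_{h0}$, in particular for $w_h\in\{u_h,\,\Pi_h u,\,\zeta\}$.

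With these in hand, the LHS is handled by writing $-(f,u-u_h)=-a_h(u,u)+a_h(u_h,u_h)$ and expanding $u=\Pi_h u+\eta$, $u_h=\Pi_h u-\zeta$; by symmetry of $a_h$ this collapses to
\begin{equation*}
-(f,u-u_h)=a_h(\zeta,\zeta)-a_h(\eta,\eta)-2a_h(\Pi_h u,\eta)-2a_h(\Pi_h u,\zeta).
\end{equation*}
For the RHS I would process the six terms in three pairs. The first pair $a_h(u,\zeta)-(f,\zeta)$ becomes $a_h(u-u_h,\zeta)=a_h(\eta+\zeta,\zeta)$ using the discrete equation on $\zeta\in V_{h0}$. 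The middle pair is $a_h(\eta,\eta)+a_h(\eta,-\zeta)=a_h(\eta,\eta-\zeta)$. Summing these four collapses the cross-terms $\pm a_h(\eta,\zeta)$ and leaves $a_h(\eta,\eta)+a_h(\zeta,\zeta)$. For the last pair I would apply $(f,u)=a_h(u,u)$ and $(f,\Pi_h u)=a_h(u_h,\Pi_h u)$ to obtain $2a_h(u_h,\Pi_h u)-2a_h(u,u)+2a_h(\eta,\Pi_h u)$, and then use the single identity $u_h+\eta=u-\zeta$ together with $a_h(u,\eta)=a_h(\Pi_h u,\eta)+a_h(\eta,\eta)$ to reduce this to $-2a_h(\Pi_h u,\eta)-2a_h(\Pi_h u,\zeta)-2a_h(\eta,\eta)$. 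Adding to the previous block reproduces the LHS above.

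There is no analytic obstruction to this proof; the only thing to watch is the combinatorial bookkeeping of the six RHS terms, because the variational substitutions $(f,\cdot)\leftrightarrow a_h(u,\cdot)$ and $(f,\cdot)\leftrightarrow a_h(u_h,\cdot)$ are each legitimate only when the second argument lies in the appropriate space ($H^2_0$ in one case, $V_{h0}$ in the other), so one must introduce $\Pi_h u$ deliberately in order to use the discrete equation where needed. Once the two variational identities are applied to $(f,u)$, $(f,\Pi_h u)$ and $(f,\zeta)$ in the right places, symmetry of $a_h$ and the decomposition $u-u_h=\eta+\zeta$ deliver the identity.
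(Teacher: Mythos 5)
Your identity-check is correct. I verified the bookkeeping: with $\eta=u-\Pi_h u$, $\zeta=\Pi_h u-u_h$, both sides of \eqref{eq19} reduce to $a_h(\zeta,\zeta)-a_h(\eta,\eta)-2a_h(\Pi_h u,\eta)-2a_h(\Pi_h u,\zeta)$, and the two variational substitutions you use are applied legitimately: $(f,u)=a_h(u,u)$ from \eqref{eq7} with the admissible test function $u\in H^2_0(\Omega)$ (and $a_h=a_\Omega$ on $H^2(\Omega)$), and $(f,w_h)=a_h(u_h,w_h)$ from \eqref{eq5} only for $w_h\in\{\Pi_h u,\,\zeta\}\subset V_{h0}$, which you justify via $H^s\hookrightarrow C^1$ and the vanishing of $u$ and $\nabla u$ at boundary vertices so that $\Pi_h u\in V_{h0}$.

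As for comparison with the paper: the paper gives no proof of this lemma at all — it simply cites \cite{HuShi}, where the identity is established for the two-dimensional Adini element. Your argument is therefore not "the same as the paper's" in any literal sense, but it is the natural self-contained verification, it is dimension-independent (nothing in it uses $d=2$), and it makes explicit the only two non-algebraic ingredients (membership $\Pi_h u\in V_{h0}$ and the two Galerkin-type substitutions), which the paper leaves implicit in the citation. The one presentational improvement I would suggest is to display the intermediate computation for the last pair, $2(f,\Pi_h u-u)+2a_h(\eta,\Pi_h u)=-2a_h(\Pi_h u,\eta)-2a_h(\Pi_h u,\zeta)-2a_h(\eta,\eta)$, since as written it is the only step a reader cannot check at a glance; expanding $(f,\eta)=a_h(u,u)-a_h(u_h,\Pi_h u)$ and then writing everything in terms of $\Pi_h u,\eta,\zeta$ makes it immediate.
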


\paragraph{\textbf{Proof of Theorem \ref{thm:l2err}}}

  It follows from (\ref{eq43}) that
  \begin{equation}\label{eq23}
  a_h(u,\Pi_h u-u_h)-(f,\Pi_h u-u_h)_{L^2(\Omega)}\leq Ch^2|u|_{4,\Omega}|\Pi_h u-u_h|_h\leq Ch^4|u|^2_{4,\Omega}.
  \end{equation}
  By the Cauchy-Schwarz inequality and the error estimate (\ref{eq18}), it yields
\begin{equation}
a_h(u-\Pi_h u, u-\Pi_h u)+2(f,\Pi_h u-u)_{L^2(\Omega)}\leq Ch^4(|u|_{4,\Omega}+||f||_{L^2(\Omega)})|u|_{4,\Omega},\label{eq24}
\end{equation}
\begin{equation}
a_h(u-\Pi_h u, u_h-\Pi_h u)\leq Ch^4|u|^2_{4,\Omega}.\label{eq25}
\end{equation}
 The error estimate of the last term of (\ref{eq19}) by Lemma \ref{lem5} gives
 \begin{equation}
 \alpha h^2\leq a_h(u-\Pi_hu,\Pi_hu).\label{eq100}
 \end{equation}
Hence, a combination of (\ref{eq19})-(\ref{eq100}) leads to
\begin{equation}
 (-f, u-u_h)_{L^2(\Omega)}\geq \delta h^2.
\end{equation}
for some positive constant $\delta$, which is independent of the mesh size $h$ and the mesh size is small enough.

Therefore,
\begin{eqnarray*}
||u-u_h||_{L^2(\Omega)}&=&\sup_{0\neq w\in L^2(\Omega)}\frac{(w,u-u_h)_{L^2(\Omega)}}{||w||_{L^2(\Omega)}}\\&\geq&\frac{(-f,u-u_h)_{L^2(\Omega)}}{||-f||_{L^2(\Omega)}}\geq \delta/||f||_{L^2(\Omega)}h^2.
\end{eqnarray*}
This finishes the proof.
\qed

\section{Numerical examples}
\label{sec:numerical}

In this section, we present some numerical results of the three-dimensional Adini element by congruence partition of cubic meshes and non-congruence partition of domain $\Omega$ to demonstrate our theoretical results. Herein, we give $u_1(x,y,z)=sin^2(\pi x)sin^2(\pi y)sin^2(\pi z)$ and $u_2(x,y,z)=x^2(1-x)^2y^2(1-y)^2z^2(1-z)^2$ as the exact solution of problem (\ref{eq3}), respectively. One can see the errors and the rate of convergence computed by uniform cubic meshes with the meshsize $h=\frac{1}{N}$ for some integer $N$ in Figure 2. One can also see the errors and the rate of convergence computed by non-congruence meshes in Figure 3 for logarithmic plot.

\begin{figure}[!hbt]
  \centering
  \includegraphics[width=5in, height=3in]{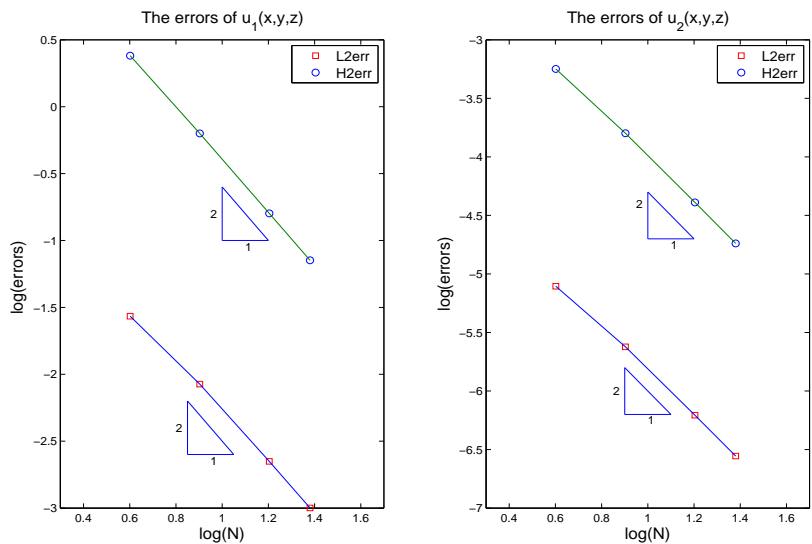}
  \caption{The errors in $L^2$ and $H^2$ norms for $u_1(x,y,z)$ and $u_2(x,y,z)$ by uniform cubic meshes}

\end{figure}

\begin{figure}[!hbt]
  \centering
  \includegraphics[width=5in, height=3in]{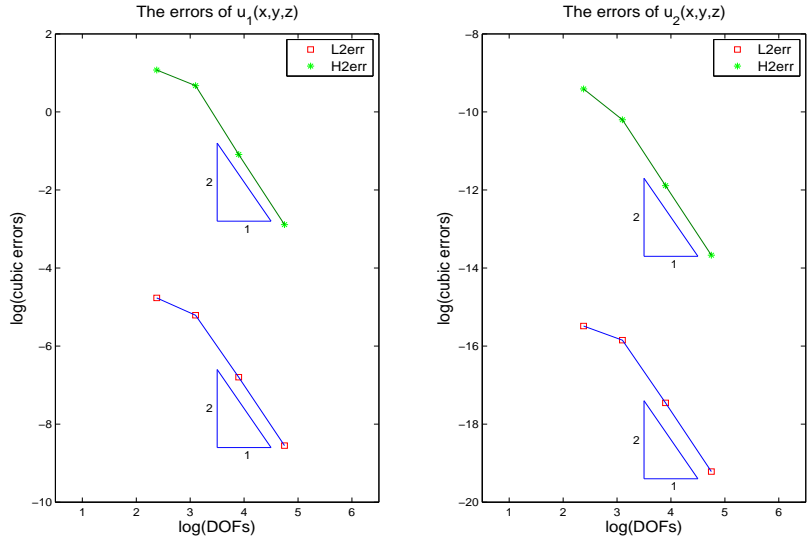}
  \caption{The errors in $L^2$ and $H^2$ norms for $u_1(x,y,z)$ and $u_2(x,y,z)$ by non-congruence meshes}

\end{figure}

\section{Concluding remarks}
\label{sec:conclusion}

In this paper, we studied the accuracy of the Adini element as a discretization scheme for fourth order problem in any dimension. We showed that the convergence rate is of $\mathcal{O}(h^2)$ order in energy norm in any dimension, and moreover, we show that the convergence rate can not be non-trivially higher than $\mathcal{O}(h^2)$ order in integral norm in any dimension. By the Poincare inequality, we arrive at the conclusion that the convergence rate of Adini element for discretising fourth order problem is $\mathcal{O}(h^2)$ in $L^2$, $H^1$ and the energy norm. This presents a complete exploration of the capacity of the scheme.

The results provided in this paper are optimal in two-folded. On one hand, the full convergence rate of the energy norm is established under the assumption $u\in H^4(\Omega)$, which is standard and of the lowest regularity. On the other hand, combining the two results together illustrates that neither of these two can be improved. The analysis of this paper has been sharp and economic enough.

It is somehow surprising to observe the convergence rates in different norms are of the same order. This is because the Adini element function is not moment continuous across the edges, but it possesses internal symmetry on every cell when the grid is of tensor type. This point of view will hint us on other elements.

\end{document}